\newcommand\reallywidehat[1]{%
\savestack{\tmpbox}{\stretchto{%
  \scaleto{%
    \scalerel*[\widthof{\ensuremath{#1}}]{\kern-.6pt\bigwedge\kern-.6pt}%
    {\rule[-\textheight/2]{1ex}{\textheight}}
  }{\textheight}%
}{0.5ex}}%
\stackon[1pt]{#1}{\tmpbox}%
}
\newcommand{\E}{\mathbb E}
\theoremstyle{plain}
\newtheorem{theorem}{Theorem}[section]
\newtheorem{lemma}[theorem]{Lemma}
\newtheorem{proposition}[theorem]{Proposition}
\theoremstyle{definition}
\newtheorem{remark}[theorem]{Remark}
\newtheorem*{remark*}{Remark}
\numberwithin{equation}{section}
\begin{document}

\title[Cutoff for a class of auto-regressive models]{Cutoff for a class of auto-regressive models with vanishing additive noise}

\author[B.~Gerencs\'er]{Bal\'azs Gerencs\'er}
\address[B.~Gerencs\'er]{Alfréd Rényi Institute of Mathematics \\ Reáltanoda utca 13-15, Budapest 1053 (HU) and E\"otv\"os Lor\'and University, Department of Probability and Statistics,  P\'azm\'any P\'eter s\'et\'any 1/C, Budapest 1117 (HU)}
\email{gerencser.balazs@renyi.hu}
\author[A.~Ottolini]{Andrea Ottolini}
\address[A.~Ottolini]{Department of Mathematics, University of Washington \\ Seattle WA 98102 (USA).}
\email{ottolini@uw.edu}

\begin{abstract}
We analyze the convergence rates for a family of auto-regressive Markov chains $(X^{(n)}_k)_{k\geq 0}$ on $\mathbb R^d$, where at each step a randomly chosen coordinate is replaced by a noisy damped weighted average of the others. The interest in the model comes from the connection with a certain Bayesian scheme introduced by de Finetti in the analysis of partially exchangeable data. Our main result shows that, when $n$ gets large (corresponding to a vanishing noise), a cutoff phenomenon occurs. 
\end{abstract}

\date{\today}

\subjclass[]{}
\keywords{}

\thanks{}
\maketitle

\section{Introduction}
Markov chains are used on a daily basis to sample from intractable distributions \cite{1995}. Under suitable ergodicity assumptions one is guaranteed that, after many iterations, a sample from the chain resembles that of its stationary distribution. For both practitioners and theoreticians, a natural question is to understand what ``many" and ``resemble" mean in this context.\\ \\ Our interest will be in a class of measures on $\mathbb R^d$ for some $d\geq 2$. A classical way to approach the problem goes as follows: if $\pi_k$ denotes the law of the Markov chain after $k$ steps, and $\pi$ its stationary measure, one is trying to understand how the total variation distance to stationarity
\begin{align*}
    d_{tv}(\pi_k,\pi):=\sup_{E\subset \mathbb R^d}|\pi_k(E)-\pi(E)|=\inf_{X_k\sim \pi_k, X\sim \pi}\mathbb P(X_k\neq X),
\end{align*}
varies as $k$ increases, the last equality being the well-known coupling interpretation of total variation distance. In the display above, the supremum is taken over all Borel sets while the infimum is taken over all couplings of $\pi$ and $\pi_k$.\\ \\ Often, the evolution of the chain depends on an additional parameter $n$, and it becomes important to understand what is the right sequence $k=k(n)$ at which the transition to randomness occurs, i.e., the total variation distance drops as needed. Typically, the parameter $n$ is related to the size of the state space, though it could encode something different. In our case, it will be related to the magnitude of the noise. For a friendly introduction to the slew of techniques and results on the subject we refer the reader to \cite{LevinPeresWilmer2006}. \\ \\ 
We consider Markov chains $(X^{(n)}_k)_{k\geq 0}$ on $\mathbb R^d$ that updates coordinates one at a time according to the auto-regressive scheme \eqref{regressive}. The regime of interest is that a small additive noise, corresponding to $n$ getting large. Informally, our result is that under mild assumptions, the chain takes order $\log n$ steps to mix. Moreover, we also prove that the transition to randomness occurs in a window of size $\sqrt{\log n}$. This is referred to as the \emph{cutoff phenomenon} \cite{Diaconis1996}. We also determine the location of the cutoff -- i.e., the constant factor of the leading term $\log n$ -- which is closely related to the convergence of a certain auxiliary Markov chain on the unit sphere.\\ \\
We now proceed with a formal definition of the model.
\subsection{The setup}
Given $d\geq 2$, let $P=(p_{ij})_{1\leq i,j\leq d}$ be the transition probabilities of a connected network without loops. For $x\in\mathbb R^d$, we define $\hat x$ by
\begin{equation}\label{averagingdef}
    \hat x_i:=(Px)_i=\sum_{j=1}^dp_{ij}x_j,
\end{equation}
where the sum actually runs over $j\neq i$ owing to the assumption that the network has no loops.
Given $e_1,\ldots, e_d\in (0,1)$, define $A_i:\mathbb R^d\rightarrow \mathbb R^d$, for $1\leq i\leq d$, by setting
\begin{equation}\label{contractionpart}
(A_ix)_j=x_j,\, (j\neq i), \quad (A_ix)_i=e_i\hat x_i.
\end{equation}
Also, given $\sigma_1,\ldots, \sigma_d \in (0,\infty)$, define $b_i:\mathbb R\rightarrow \mathbb R^d$ by
\begin{align*}
    (b_i(z))_j=0, \,(j\neq i)\quad (b_i(z))_i=\sigma_i z.
\end{align*}
Let $U$ denote the uniform measure on $\{1,\ldots ,d\}$, and let $\gamma$ be an absolutely continuous probability measure on $\mathbb R$ with $\int\max(\log x,0)\gamma(dx)<\infty$.
Given $X_0\in\mathbb R^d$ and independent random variables $I_1, Z_1, I_2, Z_2, \ldots$ where the $I_i$'s are distributed according to $U$, while  the $Z_i$'s are distributed according to $\gamma$, define now a family of Markov chains on $\mathbb R^d$ via
\begin{equation}\label{regressive}
    X^{(n)}_k=A_{I_{k}}X^{(n)}_{k-1}+\frac{1}{n}b_{I_k}(Z_k).
\end{equation}
In words, at each step a randomly chosen coordinate is replaced by a damped weighted average of the others, to which a (small) noise is added.
Owing to the assumption of $\gamma$, we are guaranteed by Theorem 2.1 in \cite{Diaconis1999} that $X^{(n)}_k$ has a unique stationary distribution (for fixed $n$), which is the law of $\overline X^{(n)}$ defined in terms of the backward iteration
\begin{equation}\label{stationary}
    \overline X^{(n)}=\frac 1 nb_{I_1}(Z_1)+\frac 1 n A_{I_1}b_{I_2}(Z_{2})+\frac 1 nA_{I_1}A_{I_2}b_{I_3}(Z_{3})+\ldots
\end{equation}
Our main goal is to analyze the rate of convergence to stationarity for a large class of initial data. 
\begin{theorem}\label{maintheorem}
Let $\pi^{(n)}_k$ and $\pi^{(n)}$ be the laws of $X^{(n)}_k$ and $\overline X^{(n)}$ as defined above. Consider any initial condition $X_0\in [a_-, a_+]^d$ for some fixed $0<a_-\leq a_+$. Then, there exists a constant $\alpha\in (-\infty,0)$ independent of $n$ such that if 
\begin{equation}\label{choiceofk}
    k=k(n,\beta):=\left\lfloor\frac{\log n+\beta\sqrt{\log n}}{-\alpha}\right\rfloor, 
\end{equation}
then we have
\begin{align*}
  \lim_{\beta\rightarrow -\infty}\lim_{n\rightarrow +\infty}d_{tv}(\pi, \pi_k)=1, \quad \lim_{\beta\rightarrow +\infty}\lim_{n\rightarrow +\infty}d_{tv}(\pi, \pi_k)=0,
\end{align*}
uniformly over $X_0$.
\end{theorem}
\begin{remark}The constant $\alpha$ is defined in terms of  a certain auxiliary Markov chain on the unit sphere (see \eqref{alpha}), though its explicit value is inaccessible in general. However, using the bound 
\begin{align*}
\|\mathbb E[A_I]\|\leq 1-\frac{1}{d}\left(1-\max e_i\right)
\end{align*}
and some easy convexity argument, one can deduce from \eqref{alpha} the lower bound
\begin{align*}
-\alpha\geq \frac{1-\max e_i}{d}.
\end{align*}
The fact that the bound deteriorates as $d$ grows matches the intuition that the mixing time of the Gibbs sampler increases with the dimension. As for the dependence on the $e_i$s, notice that if they are all equal to one then the chain may not admit a stationary distribution. 
\end{remark}
\begin{remark}
As it will be clear from the proof, the second conclusion of the theorem -- i.e. the limit as $\beta\rightarrow +\infty$ -- holds even for sequences with some coordinates being equal to zero. On the other hand, the first conclusion does not hold in the case $X_0=(1,0,\ldots, 0)$ since with positive probability (namely, if the first coordinate is selected first) the chain will mix in a bounded number of steps. If $X_0$ has non-negative coordinates with at least two of them being strictly positive, it is easy to show that with high probability all coordinates will be bounded away from zero in a bounded number of steps, and thus our result applies.
\end{remark}
Let us give an overview of the main heuristic behind the proof.
We start by analyzing the chain that we obtain by averaging over the randomness stemming from the $Z_k$'s. The core of the proof is to show that this chain is $O(1/n)$ with high probability precisely when $k$ is given by \eqref{choiceofk} for some fixed $\beta$. Then, a standard machinery (namely, the concentration properties of the stationary distribution and the absolute continuity of $\gamma$) allows us to conclude. 
\\ \\
In the case $d=2$, re-selecting the same coordinate has no effect on the distribution of the Markov chain, so that one can think of choosing coordinates in a deterministic fashion. Moreover, this allows for an explicit evaluation of $\alpha$. If $\gamma$ is the law of a normal random variable, this allows the numerical estimation of the total variation distance, displayed in Figure \ref{fig:hitallrange}, in striking accordance with the theoretical results. 
\begin{figure}[h]
    \centering
        \includegraphics[width=0.6\textwidth]{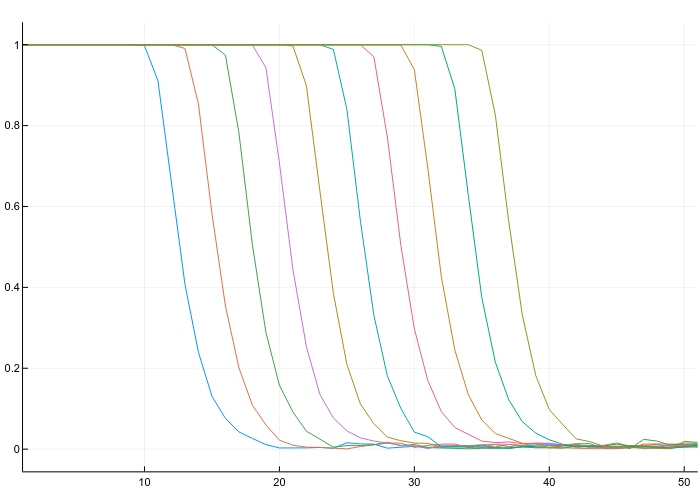}
    \caption{Numerical estimation of the total variation distance between $\pi^{(n)}_k$ and $\pi^{(n)}$, in the case $d=2$, $e_1=e_2=0.55$, and $\gamma$ being a standard normal random variable. This results in $\alpha\approx -0.588$. The curves are drawn for ten values of $n$ in a geometric progression with ratio $5$, starting from $n=1000$. Both the logarithmic scaling and the cutoff are visible. Moreover, the horizontal distance between the curves is approximately $\frac{\log 5}{-\alpha}\approx 2.738$, as predicted by our theorem.}
    \label{fig:hitallrange}
\end{figure}

In general, to estimate the  total variation distance one needs a more careful approach, even when $\gamma$ is the law of a normal random variable. Indeed, one has to approximate the distance between mixtures of normal random variables, for which no explicit formulas are available. However, our Theorem \ref{maintheorem} guarantees that both the logarithmic scaling and the cutoff are extremely robust.

\subsection{Structure of the paper}
The rest of the paper is organized as follows. In Section \ref{sec2} we briefly review a statistical motivation behind the model and other related literature. In Section \ref{sec3} we analyze the projection onto the unit sphere of the walk that is obtained by averaging over the additive noise. Then, in Section \ref{sec4} we leverage the properties of this chain to obtain our main result on the convergence rate and cutoff.

\section{Some background}\label{sec2}
\subsection{A statistical motivation}
Our interest in the problem came from a certain Bayesian scheme introduced by de Finetti \cite{de1938condition}: a large population is splitted into $d$ groups, and binary experiments are performed in each one. Under the assumption that people in the same group are indistinguishable, a situation referred to as \emph{partial exchangeability}, an approximate version of the classical de Finetti's theorem for exchangeable arrays -- which becomes exact for an infinite population -- reduces the problem to the understanding of certain measures $\pi$ on $\mathbb [0,1]^d$. The $i$-th coordinate $x_i\in [0,1]$ in a sample from $\pi$ should be interpreted as the likelihood of the experiment being successful in the $i$-th category. \\ \\One way to model the problem goes as follows: given a connected network on $d$ vertices with no loops and with weights $c_{ij}$ together with $x^{*}\in\mathbb R^d$ and $g\in\mathbb R_{+}^d$, consider the quadratic form
\begin{align*}
    Q(x)&=\sum_{1\leq i\leq j\leq d}c_{ij}(x_i-x_j)^2+\sum_{1\leq i\leq d}g_i(x_i-x_i^*)^2.
\end{align*}
Here $Q(x)$ can be also reparameterized as follows:
\begin{align*}    
    Q(x)&=\sum_{i=1}^d\frac{1}{\sigma^2_i}\left[e_i\left(x_i-\hat x_i\right)^2+\left(1-e_i\right)\left(x_i-x_i^*\right)^2\right],
\end{align*}
where we have
\begin{align*}
    e_i:=\frac{\sum_{j\neq i}c_{ij}}{\sum_{j\neq i}c_{ij}+g_i},\quad  \sigma_i^2:=\frac{1}{\sum_{j\neq i}c_{ij}+g_i},
\end{align*}
while $\hat x_i:=\sum p_{ij}x_j$ is defined via the transition probabilities on the network (i.e., $p_{ij}:=\frac{c_{ij}}{\sum_j c_{ij}}$). Finally, define $\pi^{(n)}$ to be the truncated Gaussian measure
\begin{align*}
    \pi^{(n)}(x)\,\,\,\propto \,\,e^{-n\frac{Q(x)}{2}},
\end{align*}
on the unit cube, where $n$ is a large parameter. \\ \\
The case $g_i\equiv 0$ (equivalently, $e_i\equiv 1$) corresponds to a prior situation where only the discrepancies in the likelihoods $x_i-x_j$ in different groups are taken into account, which are weighted by the $c_{ij}$s in the network. For $n$ large, this is referred to as the \emph{almost exchangeable} case since a sample from $\pi$ will typically consists of a point where all coordinates $x_i$s are roughly equal -- i.e., the result of the experiment on a given person is mildly affected by his/her group. \\ \\
After (sufficient) data are collected from each group, central limit theorem considerations lead to an approximately Gaussian Bayesian factor. In the measure $Q$, this is represented by the coefficients $g_i$ becoming positive (equivalently, $e_i\in (0,1)$). We refer to the last chapter of \cite{ottolini2021birthday} or \cite{Diaconis2022} for more background.\\ \\
To overcome numerical problems arising from the truncation \cite{Genz1992}, a first approach is to use rejection sampling. Alternatively, one can utilize a Gibbs sampler, since sampling from one-dimensional truncated normal distributions can be done efficiently \cite{Chopin2010}.
Standard concentration inequalities (see \cite{ottolini2021birthday}) show that the mixing time of the Gibbs sampler is only mildly affected by the truncation for $n$ large, as long as $e_i\in (0,1)$ for all $1\leq i\leq d$. Then, the problem boils down to the understanding of the auto-regressive model with the $Z_k$'s being standard normals. 
\\ \\
As a corollary of Theorem \ref{maintheorem}, we obtain that the mixing time for the Gibbs sampler associated to $\pi^{(n)}$ is of order $\log n$ as long as $e_i>0$. The case $e_i\equiv 1$ behaves rather differently and the mixing time becomes instead of order $n$ (see \cite{gerencser2020rates}).
\subsection{Related work}
There has been substantial work to understand the evolution of dynamics similar to the current setup. If we disregard the additive noise, we see that the starting point is closely related to the seminal result on random matrix products \cite{Furstenberg1960}.
\begin{proposition}[Fürstenberg-Kesten theorem, \cite{Furstenberg1960}]
\label{prp:F-K}
Let $(C_k)_{k=1}^\infty$ be a strictly stationary ergodic series of $d\times d$ matrices such that $\E \log^+ \|C_1\| < \infty$. Then the following limit exists almost surely:
$$
\lambda_1 = \lim_{k\to \infty}\frac 1k \log \|C_k C_{k-1}\cdots C_1\|
 = \lim_{k\to \infty}\frac 1k \E \log \|C_k C_{k-1}\cdots C_1\|.
$$
In this expression $\lambda_1<\infty$, but $\lambda_1=-\infty$ may occur.
\end{proposition}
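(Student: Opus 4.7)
The plan is to derive this as a direct application of Kingman's subadditive ergodic theorem. First I would introduce $S_n := C_n C_{n-1} \cdots C_1$ and set $a_n := \log \|S_n\|$. Submultiplicativity of the operator norm gives $\|S_{n+m}\| \leq \|C_{n+m} \cdots C_{n+1}\| \cdot \|S_n\|$, and therefore $a_{n+m} \leq a_n + a_m \circ T^n$, where $T$ denotes the measure-preserving shift realizing the stationary sequence $(C_k)$. This is precisely the subadditive cocycle inequality; stationarity and ergodicity of $(C_k)$ transfer to the array $(a_n)$.

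Next I would verify the sole integrability hypothesis required by Kingman's theorem, namely $\E a_1^+ < \infty$. Since $a_1 = \log \|C_1\|$ and the assumption provides $\E \log^+ \|C_1\| < \infty$, this is immediate. Crucially, no control on the negative part is assumed, which is precisely what leaves room for the degenerate case $\lambda_1 = -\infty$ (this occurs, for instance, if $\E \log \|C_1\| = -\infty$, e.g.\ when $C_1$ collapses to a lower-dimensional subspace with sufficiently large probability).

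Kingman's theorem then produces an almost sure limit $\tfrac{1}{n} a_n \to \lambda_1$, where a priori $\lambda_1$ is an invariant random variable valued in $[-\infty, +\infty)$; ergodicity of the underlying shift collapses it to a deterministic constant, and the same theorem identifies the limit with $\inf_n \tfrac{1}{n} \E a_n = \lim_n \tfrac{1}{n} \E a_n$. The bound $\lambda_1 < +\infty$ is automatic from $\E a_n \leq n\, \E a_1^+ < \infty$, obtained by iterating subadditivity and taking expectations.

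There is no genuine obstacle here: the statement is a formal consequence of submultiplicativity once one sets up the subadditive cocycle. The only mildly delicate point is the case $\lambda_1 = -\infty$, which in the standard proof of Kingman's theorem is handled by truncating $a_n$ from below at $-Mn$, applying the integrable version, and sending $M \to +\infty$. This subtlety is absorbed into the black-box invocation and requires no additional work in the present context.
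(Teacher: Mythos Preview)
The paper does not supply its own proof of this proposition; it is quoted verbatim from the literature in the background section and left unproved. Your argument via Kingman's subadditive ergodic theorem is correct and is the standard modern route: the operator norm is submultiplicative, so $\log\|C_n\cdots C_1\|$ forms a subadditive cocycle over the shift, the hypothesis $\E\log^+\|C_1\|<\infty$ is exactly the integrability input Kingman needs, and ergodicity forces the limit to be the deterministic constant $\inf_n \tfrac{1}{n}\E a_n$, with the possibility $\lambda_1=-\infty$ left open because no lower moment is assumed.

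The only historical remark worth making is that the original 1960 argument of Furstenberg and Kesten predates Kingman's 1968 theorem and proceeds by a direct hands-on computation; Kingman later extracted the abstract subadditive mechanism partly motivated by this very example. So your proof is the streamlined modern version rather than the one in the cited reference, but it is entirely valid and there is nothing to correct.
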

The cited result is general in terms of applicability, having minimal constraints on the matrix series. In the current setting, however, we want to understand the evolution at a finite horizon rather than in an asymptotic manner.
\\ \\
Substantial work on the discrepancy from the above limit rate has also been carried out. For an i.i.d.\ series of invertible matrices, $\frac{1}{\sqrt{k}}(\log \|C_k C_{k-1}\cdots C_1\|-k\lambda_1)$ is asymptotically normal, as it was shown in \cite{PSMIR_1980___1_A4_0} and refined in \cite{Benoist2016} where the optimal moment conditions were determined, namely $\E\log^2\max(\|C_1\|,\|C_1^{-1}\|)<\infty$.
Similar results are available when other structural requirements are made, in particular for allowable matrices. A non-negative matrix is allowable, if all rows and columns contain strictly positive elements. Together with additional assumptions, a central limit theorem is shown to hold for stationary ergodic random products of such matrices (see \cite{Hennion1997}).
\\ \\
Observe that the set of matrices $A_i, 1\le i \le d$ currently studied are neither invertible nor allowable, as the $i$th column of $A_i$ has all zero entries, which suggests the specialized challenge. 
\\ \\
Moreover, our model \eqref{regressive} requires taking into account the additive term besides the linear map during the updates.
One can consider a setup of even wider generality, by randomly iterating maps in a complete separable metric space $(S,\rho)$. That is, define a Markov chain using a collection of maps $\{f_\theta \mid \theta\in\Theta\}$ by 
\begin{equation}
    \label{eq:D-F-MCdef}
X_0=x_0,\quad X_{k+1}=f_{\theta_{k+1}}(X_k),
\end{equation}
with i.i.d.\ indices $\theta_k$ according to a distribution $\mu$.
In this framework, stability can be ensured as follows.
\begin{proposition}[\cite{Diaconis1999}, Theorem 1.1.]
In the above setup, assume for all $\theta\in \Theta$ that $f_\theta$ is Lipschitz with Lipschitz constant $K_\theta$. We further assume $\int_\Theta K_\theta \mu(d\theta)<\infty$, $\int_\Theta \log K_\theta \mu(d\theta)<0$, and for some $x_0\in S$, $\int_\Theta \rho(x_0,f_\theta(x_0)) \mu(d\theta)<\infty$.

Then the Markov chain in \eqref{eq:D-F-MCdef} has a unique stationary distribution, and exponential convergence occurs in the Prokhorov metric. Here, the rate is bounded away from 0 uniformly in $x_0$.

Moreover, the backward recursion $f_{\theta_1}\circ f_{\theta_2}\circ \ldots \circ f_{\theta_k}(x_0)$ converges almost surely.
\end{proposition}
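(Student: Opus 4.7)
The plan is to treat all three conclusions by analyzing the \emph{backward iteration} $Y_k:=f_{\theta_1}\circ f_{\theta_2}\circ\cdots\circ f_{\theta_k}(x_0)$ rather than the forward chain $X_k$. This is a classical device: because each new map $f_{\theta_{k+1}}$ is composed \emph{inside} rather than outside, the sequence $(Y_k)$ has a chance of converging almost surely, not merely in distribution. Iterating the Lipschitz bound gives
$$\rho(Y_{k+1},Y_k)\leq K_{\theta_1}K_{\theta_2}\cdots K_{\theta_k}\,\rho(f_{\theta_{k+1}}(x_0),x_0).$$
The strong law of large numbers applied to the i.i.d.\ sequence $\log K_{\theta_i}$, which has finite and negative mean $c:=\int\log K_\theta\,\mu(d\theta)<0$, gives $K_{\theta_1}\cdots K_{\theta_k}\leq e^{ck/2}$ eventually almost surely. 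On the other hand, Markov's inequality combined with Borel--Cantelli --- applied to the i.i.d.\ sequence $R_k:=\rho(f_{\theta_{k+1}}(x_0),x_0)$ with $\E R_k<\infty$ --- forces $R_k=o(e^{-ck/4})$ almost surely. Hence $\sum_k \rho(Y_{k+1},Y_k)<\infty$ a.s., and completeness of $(S,\rho)$ produces an almost sure limit $Y_\infty$, establishing the last claim of the proposition.

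For the stationary distribution, I would observe that $X_k$ and $Y_k$ have the same law for each fixed $k$, since they are the same deterministic function of $(\theta_1,\ldots,\theta_k)$ composed in reverse order, and the joint law of i.i.d.\ variables is invariant under permutation of indices. Therefore $X_k\Rightarrow \pi:=\mathrm{Law}(Y_\infty)$. Invariance of $\pi$ is immediate from $Y_\infty=f_{\theta_1}\bigl(\lim_k f_{\theta_2}\circ\cdots\circ f_{\theta_{k+1}}(x_0)\bigr)$, where the inner limit has law $\pi$ and is independent of $\theta_1$. For uniqueness, let $\nu$ be any invariant law and take $X_0'\sim\nu$ independent of $(\theta_k)$; iterating the one-step Lipschitz bound gives $\rho(X_k,X_k')\leq K_{\theta_k}\cdots K_{\theta_1}\,\rho(X_0,X_0')\to 0$ almost surely, so $X_k$ and $X_k'$ share their distributional limit and $\pi=\nu$.

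The main obstacle is the exponential Prokhorov convergence at a rate \emph{bounded away from zero uniformly in $x_0$}. I would use the same forward coupling with $X_0'\sim\pi$ (independent of the driving noise) to get $\rho(X_k,X_k')\leq \Lambda_k\,\rho(x_0,X_0')$ with $\Lambda_k:=K_{\theta_k}\cdots K_{\theta_1}$, and control the two factors separately. For $\Lambda_k$, I would use a Chernoff-type bound: since $\E K_\theta<\infty$ and $\E\log K_\theta=c<0$, the function $s\mapsto \log\E K_\theta^s$ has right derivative $c$ at $s=0$, so $\E K_\theta^s<1$ for some small $s>0$, and Markov on $\Lambda_k^s$ yields $\mathbb P(\Lambda_k>e^{ck/2})\leq e^{-\delta k}$ for some $\delta>0$ \emph{independent of $x_0$}. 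For $\rho(x_0,X_0')$, I would first verify that $\pi$ has a finite $s$-moment around $x_0$, by applying subadditivity of $t\mapsto t^s$ to $\rho(x_0,Y_\infty)\leq\sum_j\rho(Y_j,Y_{j+1})$ and the same product estimate as above. Choosing $\epsilon_k=e^{ck/3}$ then forces $d_P(\mathrm{Law}(X_k),\pi)\leq\epsilon_k$ for all sufficiently large $k$. The subtle point the argument must respect is that the dependence on $x_0$ enters only through $\E[\rho(x_0,X_0')^s]$, which shifts the burn-in \emph{additively} but does not touch the exponential rate $|c|/3$ --- precisely the content of the ``uniform in $x_0$'' clause, and the step most easily botched if one works with $L^1$ bounds throughout instead of passing to the fractional moment afforded by $\int K_\theta\,\mu(d\theta)<\infty$.
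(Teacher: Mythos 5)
This proposition is quoted by the paper from \cite{Diaconis1999} (Theorem 1.1) without proof, so there is no internal argument to compare against; your proposal is correct and is essentially the standard Diaconis--Freedman proof: almost sure convergence of the backward iteration via the strong law plus Borel--Cantelli, equality in law of forward and backward compositions to identify the stationary limit, and the fractional-moment bound $\E K_\theta^s<1$ for small $s>0$ (valid since $\E K_\theta<\infty$ and $\int_\Theta\log K_\theta\,\mu(d\theta)<0$) to get the exponential Prokhorov rate with the $x_0$-dependence confined to the prefactor. The only minor caveat is that the hypothesis permits $\int_\Theta\log K_\theta\,\mu(d\theta)=-\infty$, so where you assume a finite mean $c$ one should either invoke the extended strong law or replace $K_\theta$ by $\max(K_\theta,e^{-M})$ for a suitable truncation level $M$; this changes nothing in the argument.
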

This tool is powerful for its generality -- several Markov chains can be cast in this language -- and it highlights the \emph{contracting in average} condition. While this can be relaxed in the affine case (see Theorem $2.1$ in \cite{Diaconis1999}) to include our case, it still does not capture exactly the rate as Proposition \ref{prp:F-K}.
\\ \\
Therefore, our setup and claim fall outside the regime of the important works reviewed above.

\section{A random walk on the sphere, in the positive cone}\label{sec3}
As already hinted at, the convergence rate of the chain $X^{(n)}_k$, as defined in \eqref{regressive}, is essentially determined by the concentration properties of $Y_k$ defined via
\begin{equation}\label{walkonsphere}
    Y_0=\frac{X_0}{\|X_0\|}, \quad Y_k=\frac{A_{I_k}Y_{k-1}}{\|A_{I_k}Y_{k-1}\|},
\end{equation}
Notice that this is well defined as long as $X_0$ has positive coordinates, since the only effect of $A_{I_k}$ (defined in \eqref{contractionpart}) is to replace one coordinate with a damped weighted average of the others.
It is worth noting that $Y_k$ is simply the chain obtained by averaging over the additive noise, and then normalized to lie on the unit sphere, i.e., 
\begin{equation}\label{Yonthesphere}
    Y_k=\frac{\mathbb E_Z[X^{(n)}_k]}{\|\mathbb E_Z[X^{(n)}_k]\|},
\end{equation}
where $\mathbb E_Z$ denotes the expectation with respect to  $Z_1,\ldots, Z_k$. Notice that there is no dependence on $n$ because of the linearity of expectation and $\mathbb E[Z_1]=0$. \\ \\
Our first goal is to show a uniform bound on the ratio between coordinates in $Y_k$. We start by introducing some notation: for $y, y'\in\mathbb R^d$ write $y>y'$ or $y\geq y'$ if the same type of inequality holds coordinatewise. For $\delta\geq 0$, let $\mathbf 1_{\delta}$ be the vector with all components identically equal to $\delta$. Then, let  
\begin{align*}
\mathcal S_{\delta}:=\{y\in\mathbb R^d \,|\, \|y\|=1, y>\mathbf 1_{\delta} \}.
\end{align*}
We write $\mathcal S$ for $\mathcal S_0$. Notice that our assumption on $X_0$ implies that $Y_k\in \mathcal S$ for all $k$. When each $e_i$ in the definition \eqref{contractionpart} is equal to one, then convexity entails that $\frac{\min Y_k}{\max Y_k}$ remain bounded away from zero. The first step is to generalize this to the case $e_i<1$, namely by showing that regardless of the choices of the updates, $Y_k\in\mathcal S_{\delta}$ for some $\delta>0$ that does not depend on $k$.
\begin{lemma}\label{avoidancelemma}
Let $\theta:=\min_{i\sim j}e_ip_{ij}>0$, where $i\sim j$ denotes a pair for which $p_{ij}>0$. Then, for all choices of the updates, one has the bound
\begin{equation}\label{claimforlemma21}
\frac{\min Y_k}{\max Y_k}\geq \frac{\min Y_0}{\max Y_0}\theta^{d-1}.
\end{equation}
\end{lemma}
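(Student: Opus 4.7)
The plan is to strengthen the claim: for every $k\ge 0$ and every pair $i,j\in\{1,\ldots,d\}$,
\[
\frac{(Y_k)_i}{(Y_k)_j}\;\ge\;\frac{\min Y_0}{\max Y_0}\,\epsilon^{d_G(i,j)},
\]
where $d_G$ denotes the shortest-path distance in the network $P$. Since a connected graph on $d$ vertices has diameter at most $d-1$, taking the infimum over pairs $(i,j)$ recovers \eqref{claimforlemma21}. The normalization in \eqref{walkonsphere} preserves pairwise ratios, so it is enough to establish the inequality for the unnormalized iterates $V_k:=A_{I_k}\cdots A_{I_1}Y_0$.

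The argument proceeds by induction on $k$. The base case $k=0$ is immediate since $\epsilon<1$. For the inductive step, let $\iota=I_k$; by \eqref{contractionpart} we have $(V_k)_\ell=(V_{k-1})_\ell$ for $\ell\ne\iota$ and $(V_k)_\iota=e_\iota\sum_m p_{\iota m}(V_{k-1})_m$. If neither $i$ nor $j$ is $\iota$ (or $i=j$) the claim is trivial. When $i=\iota\ne j$, I would pick the neighbor $m^\star$ of $\iota$ on a shortest $\iota$-$j$ path so that $d_G(m^\star,j)=d_G(\iota,j)-1$, and then use $(V_k)_\iota\ge\epsilon(V_{k-1})_{m^\star}$ together with the inductive hypothesis applied to the pair $(m^\star,j)$ to pick up exactly one extra power of $\epsilon$.

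The main obstacle is the opposite case $j=\iota\ne i$: here $(V_k)_\iota$ has been freshly updated and a lower bound on it is not what we want. The trick is to apply the inductive hypothesis in its equivalent upper-bound form $(V_{k-1})_m/(V_{k-1})_i\le r_0^{-1}\epsilon^{-d_G(i,m)}$, where $r_0:=\min Y_0/\max Y_0$, combined with the triangle inequality $d_G(i,m)\ge d_G(i,\iota)-1$ valid for every neighbor $m$ of $\iota$. Averaging against the weights $p_{\iota m}$ yields
\[
\frac{(V_k)_\iota}{(V_{k-1})_i}\;\le\;\frac{e_\iota}{r_0}\,\epsilon^{1-d_G(i,\iota)},
\]
and the harmless inequality $e_\iota\epsilon\le 1$ (valid since $e_\iota<1$ and $\epsilon\le 1$) turns this into the desired bound $(V_k)_i/(V_k)_\iota\ge r_0\,\epsilon^{d_G(i,\iota)}$, closing the induction.

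A straightforward induction on the bare ratio $\min V_k/\max V_k$ would lose an extra factor of $\epsilon$ at precisely this step, which is why I expect the proof to require the refined bound: indexing by the graph distance $d_G$ is exactly what allows the triangle inequality to absorb the loss incurred each time the updated coordinate $\iota$ happens to lie on the ``wrong side'' of the ratio being controlled.
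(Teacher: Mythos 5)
The inductive step in the problematic case $j=\iota$ does not go through, and the misstep is exactly where you locate the ``main obstacle.'' To bound $(V_k)_\iota=e_\iota\sum_m p_{\iota m}(V_{k-1})_m$ from above using the hypothesis $(V_{k-1})_m\le r_0^{-1}\epsilon^{-d_G(i,m)}(V_{k-1})_i$, you must bound $\epsilon^{-d_G(i,m)}$ from \emph{above}, and since $\epsilon<1$ this requires an \emph{upper} bound on $d_G(i,m)$; the triangle inequality only gives $d_G(i,m)\le d_G(i,\iota)+1$, so the correct conclusion is $(V_k)_\iota/(V_{k-1})_i\le (e_\iota/r_0)\,\epsilon^{-d_G(i,\iota)-1}$, not your $(e_\iota/r_0)\,\epsilon^{1-d_G(i,\iota)}$. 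Your inequality $d_G(i,m)\ge d_G(i,\iota)-1$ points the wrong way: it lower-bounds the very terms you are trying to upper-bound. With the corrected exponent, closing the induction would require $e_\iota\le\epsilon$, which fails whenever $\iota$ has more than one neighbor, since $\epsilon\le e_\iota p_{\iota m}<e_\iota$.

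This is not merely a computational slip: the strengthened pairwise statement is not inductively stable, i.e.\ the hypothesis ``all ratios are at least $r_0\epsilon^{d_G(i,j)}$'' is not preserved by a single update. Take the path $1\sim 2\sim 3$ with $e_i\equiv e$, $p_{21}=p_{23}=\tfrac12$, so $\epsilon=e/2$. The vector $(\epsilon^2,\epsilon,1)$ satisfies every pairwise bound with $r_0=1$, yet updating coordinate $2$ produces the value $e\bigl(\tfrac12\epsilon^2+\tfrac12\bigr)=\epsilon(1+\epsilon^2)$, and the ratio of coordinates $1$ and $2$ drops to $\epsilon/(1+\epsilon^2)<\epsilon$. (Such a vector is not reachable from the actual dynamics started at $\mathbf 1_{1}$, but your induction has no way of knowing that.) The paper's proof circumvents precisely this: after reducing, by homogeneity and monotonicity, to the initial condition $\mathbf 1_{1}$, it propagates the stronger coordinatewise invariant $(Y_k)_i\ge e_i(\hat Y_k)_i$ for all $i$ --- a condition that \emph{is} preserved by every update --- and only afterwards chains the resulting edge inequality $(Y_k)_i\ge\epsilon (Y_k)_j$ for $j\sim i$ along a shortest path joining the argmin and the argmax. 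If you want to keep your distance-indexed formulation, you would need to add this invariant (or an equivalent strengthening) to the induction hypothesis; the pairwise ratio bounds alone are too weak to self-propagate.
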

\begin{proof}
We start by observing that, since the statement we aim to prove is scale-invariant, we can study the chain $Y_k$ where we neglect the normalization in \eqref{walkonsphere}. For convenience, we still denote it by $Y_k$. \\ \\
Moreover, if for another chain $\tilde Y_k$ we have $Y_0\geq \tilde Y_0$ and the same updates are used for both chains, then $Y_k\geq \tilde Y_k$ for all $k$. Together with the observation
\begin{align*}
   \mathbf 1_{\min Y_0}\leq  Y_0\leq \mathbf 1_{\max Y_0},
\end{align*}
it suffices to prove the statement when $Y_0=\mathbf 1_{1}$, for which the right side of \eqref{claimforlemma21} is just $\theta^{d-1}$. We now proceed by induction on $k$ as follows.\\ \\
For all $1\leq i\leq d$ we have 
\begin{align*}
    1=(Y_0)_i\geq (e_i\widehat Y_0)_i=e_i.
\end{align*}
We will now prove that the inequality in the middle holds for $Y_1$ as well, regardless of the choice of the updated index $I$. In fact, there are three possibilities:
\begin{itemize}
    \item If $i\neq I$, $i\not\sim I$, then our assumption leads to
    \begin{align*}
        (Y_1)_i=(Y_0)_i\geq (e_i\widehat Y_0)_i=(e_i\hat Y_1)_i.
    \end{align*}
    since $Y_0$ and $Y_1$ coincide everywhere except on the $I$th coordinate.
    \item If $i=I$, then
    \begin{align*}
        (Y_1)_i=(e_i\widehat Y_0)_i=(e_i\widehat Y_1)_i.
    \end{align*}
   \item If $i\sim I$, then
 \begin{align*}
     (Y_1)_i=(Y_0)_i\geq (e_i\hat Y_0)_i=(e_i\hat Y_1)_i+e_ip_{Ii}\left(-(e_I\hat Y_0)_I+(Y_0)_I\right)\geq (e_i\hat Y_1)_i.
 \end{align*}
 \end{itemize}
 Iterating the argument above $k$ times for the subsequent updates, we obtain that for all choices of the updates and for all coordinates $1\leq i\leq d$, 
 \begin{align*}
     (Y_k)_i\geq (e_i\hat Y_k)_i.
 \end{align*}
For any choice of $j\sim i$, owing to the definition \eqref{averagingdef} and our choice of $\theta$ we can bound 
\begin{align*}
(Y_k)_i\geq \theta (Y_{k})_j.
\end{align*}
Consider now connecting the extremal coordinates $\text{argmin } Y_k=i_1\sim i_2\ldots i_{t-1}\sim i_t=\text{argmax } Y_k$ using a shortest path. Then, iterating the inequality above we obtain
\begin{align*}
\min Y_k \geq \theta^{t-1}\max Y_k, 
\end{align*}
and we conclude since $t\leq d$.
\end{proof}
\begin{remark}
Here is a simple geometric interpretation of the proof. Each of the $A_i$'s projects a point -- in a non-orthogonal fashion -- onto some hyperplane $H_i$. We exploit that the connected component of $\mathbb R^d\,\setminus \bigcup_{i=1}^d H_i$ containing $\mathbf 1_{1}$ is invariant under our dynamic.
\end{remark}
\begin{remark}
The inequality above is sharp when the underlying network structure is a line path of length $d$. 
\end{remark}
Armed with this lemma, our next step is to show that the law of $Y_k$ converges to a unique measure, independently of the starting position $Y_0\in\mathcal S$. 
\subsection{Weak contraction in the Hilbert metric}
Consider the Hilbert metric $h$ on $\mathcal S$, given by
\begin{align*}
    h(y, y'):=\log\left( \frac{\max \frac{y_i}{y'_i}}{\min \frac{y_i}{y'_i}}\right).
\end{align*}
Consider also the corresponding Wasserstein metric induced on Borel probability measures on $\mathcal S$
\begin{align*}
    W(\mu, \nu)=\inf_{Y\sim \mu, Y'\sim \nu}\mathbb E\left(h(Y, Y')\right),
\end{align*}
where the infimum is taken over all couplings $Y\sim \mu, Y'\sim \nu$.\\ \\
Let us highlight a few properties of these metrics: the space $\mathcal S_{\delta}$ is compact for all $\delta>0$, but not for $\delta=0$, when equipped with the metric $h$. Moreover, convergence in the metric $W$ on the space of Borel measures on $\mathcal S_{\delta}$, $\delta>0$, is tantamount weak convergence, owing to the boundedness of the metric. In particular, Prokhorov's theorem guarantees that the compactness property is inherited by the Wasserstein space.  \\ \\
This allows us to prove the following result. 
\begin{lemma}\label{invariantunique}
There exists a unique limit $\nu$ for the law of $Y_k$, which is independent of the choice of $Y_0\in\mathcal S$.
\end{lemma}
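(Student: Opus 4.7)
Plan. I will use a synchronous coupling on the Hilbert metric and lift the resulting contraction to the Wasserstein metric $W$. Given $Y_0, Y_0' \in \mathcal S$, couple two copies $(Y_k)$ and $(Y_k')$ by running them under the same sequence $(I_k)$ of updates. Since $h$ is projectively invariant, it suffices to analyze the unnormalized map: for every $y, y' \in \mathcal S$ and every $i$,
$$
h(A_i y, A_i y') \le h(y, y'),
$$
because $(A_i y)_i/(A_i y')_i = \sum_{j \ne i} p_{ij} y_j / \sum_{j \ne i} p_{ij} y'_j$ is a convex combination of the ratios $y_j/y'_j$ for $j \ne i$, hence lies between their minimum and maximum, while all other coordinates are unchanged. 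Consequently, $h(Y_k, Y_k')$ is almost surely non-increasing.

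Next I would upgrade non-expansion to strict contraction over blocks of $N$ steps. The key is to exhibit a deterministic sequence $i_1^*, \ldots, i_N^*$---obtained by cycling through $(1, 2, \ldots, d)$ sufficiently many times so that updates propagate along every path of the underlying connected network---such that the composite matrix $M = A_{i_N^*} \cdots A_{i_1^*}$ has all entries strictly positive. The Birkhoff--Hopf theorem then yields $h(My, My') \le \lambda\, h(y, y')$ for some $\lambda \in (0,1)$, uniformly on $\mathcal S$. Since the event $\{(I_{jN+1}, \ldots, I_{(j+1)N}) = (i_1^*, \ldots, i_N^*)\}$ has probability $d^{-N}$ independently across $j$, combining strict contraction with non-expansion from the first step gives
$$
\mathbb E\bigl[h(Y_{kN}, Y_{kN}')\bigr] \le \bigl(1 - d^{-N}(1-\lambda)\bigr)^k h(Y_0, Y_0'),
$$
and hence, by taking infimum over couplings, $W(\mathrm{Law}(Y_{kN}), \mathrm{Law}(Y_{kN}')) \le (1 - d^{-N}(1-\lambda))^k\, W(\mathrm{Law}(Y_0), \mathrm{Law}(Y_0'))$.

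Finally, I would conclude via a Cauchy-sequence argument. By Lemma \ref{avoidancelemma}, for any $Y_0 \in \mathcal S$ the entire trajectory stays in some $\mathcal S_{\delta_0}$ with $\delta_0 > 0$, on which $h$ has finite diameter $D$. Applying the $N$-step contraction to the initial distributions $\delta_{Y_0}$ and $\mathrm{Law}(Y_m)$ yields $W(\mathrm{Law}(Y_k), \mathrm{Law}(Y_{k+m})) \le (1 - d^{-N}(1-\lambda))^{\lfloor k/N \rfloor} D$ uniformly in $m$, so the laws form a Cauchy sequence in the complete metric space $(\mathcal P(\mathcal S_{\delta_0}), W)$; they converge to a limit $\nu$ that is automatically invariant under the Markov operator. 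The same contraction, applied to two different starting points $Y_0, Y_0' \in \mathcal S$, forces the respective limits to coincide, proving independence of $\nu$ from $Y_0$. The main obstacle I anticipate is the construction of the sequence $(i_1^*, \ldots, i_N^*)$ that makes $M$ strictly positive: each $A_i$ has a zero $i$-th column and a zero $(i,i)$ entry, so positivity cannot arise from short products, and establishing it requires an inductive argument tracking, for every pair of coordinates, the emergence of a strictly positive link via paths in the network.
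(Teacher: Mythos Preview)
Your synchronous-coupling strategy is sound and, once completed, would give a quantitative rate that the paper does not obtain. However, the specific target you set for the block matrix is unachievable: $M=A_{i_N^*}\cdots A_{i_1^*}$ can \emph{never} be strictly positive. Since $(A_i x)_j=x_j$ for $j\neq i$, row $j\neq i$ of $A_i$ is the $j$th standard basis vector, so $(A_i)_{ji}=0$; and $(A_i)_{ii}=e_ip_{ii}=0$ by the no-loop assumption. Thus column $i$ of $A_i$ is identically zero, and left-multiplication preserves a zero column, so column $i_1^*$ of $M$ vanishes regardless of $N$ and of the remaining indices. The obstacle you flag at the end is therefore not a technical nuisance but an actual obstruction to the claim as you stated it.

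The repair is to recall that Birkhoff--Hopf requires only that the image $M(\mathrm{int}\,\mathbb R_+^d)$ have finite Hilbert diameter; for this it suffices that every \emph{nonzero} column of $M$ be strictly positive. That weaker property is attainable with a long enough cycling sequence, and the inductive path-tracking argument you anticipate is exactly what is needed to establish it; once it is in place, the rest of your proof goes through verbatim.

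For comparison, the paper takes a softer, non-quantitative route. Existence of a limit comes from compactness of $\mathcal S_\delta$, and uniqueness from the \emph{strict} but not uniform inequality $W(\nu_d,\nu_d')<W(\nu,\nu')$ whenever $\nu\neq\nu'$. The strictness is obtained by exhibiting, for each pair $Y_0\neq Y_0'$, a single length-$d$ update sequence---chosen depending on the pair---that successively empties the set of coordinates realizing $\max(Y_0/Y_0')$. Your approach trades this soft argument for the Birkhoff machinery and a positivity analysis of $M$, but in return delivers an explicit geometric rate and avoids the compactness/subsequence step.
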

\begin{proof}
Let $Y_0\in\mathcal S$ be arbitrary and let $\delta$ be small enough so that $Y_k\in\mathcal S_{\delta}$ for all $k$, which we can ensure owing to Lemma \ref{avoidancelemma}. A compactness argument yields immediately the existence of a limiting measure $\nu$ up to subsequences.\\ \\
In order to show uniqueness, regardless of the initial condition, it suffices to show that for \emph{any} pair of measures $\nu\neq \nu'$ on $\mathcal S_{\delta}$ we have a weak contraction between $\nu$ and $\nu'$ after $d$ steps of the Markov chain, i.e.,
\begin{align}
\label{eq:weakWcontraction}
   W(\nu_d, \nu'_d)<W(\nu, \nu').
\end{align}
Here, $\nu_d, \nu'_d$ denote the laws of $Y_d, Y'_d$ with $Y_0, Y_0'$ being distributed according to $\nu, \nu'$.
Indeed, if both $\nu\neq \nu'$ were stationary measures, then we would obtain
\begin{align*}
   W(\nu, \nu')=W(\nu_d, \nu'_d)<W(\nu, \nu'),
\end{align*}
which is a contradiction, and thus $\nu=\nu'$. \\ \\
Owing to the convexity of the Wasserstein metric, it suffices to show the bound \eqref{eq:weakWcontraction} for $\nu$ and $\nu'$ being delta masses at some $Y_0$ and $Y'_0$, in which case the right side becomes $h(Y_0,Y'_0)$ for some $Y_0$ and $Y'_0$ in $\mathcal S_{\delta}$ for some $\delta$.\\ \\
Consider now the coupling where the same coordinates are updated for both $Y_0$ and $Y'_0$. After one step, the ratio $(Y_1)_i/(Y'_1)_i$ either remains the same (if coordinate $i$ is not selected) or is equal to the ratio of a weighted average of all other coordinates. In both cases, we have
\begin{align*}
    \min \frac{Y_0}{Y'_0}\leq \frac{(Y_1)_i}{(Y'_1)_i}\leq \max \frac{Y_0}{Y'_0}.
\end{align*}
Iterating, we obtain for all choices of indices
\begin{align*}
    \min \frac{Y_0}{Y'_0}\leq \min \frac{Y_d}{Y'_d}\leq \max \frac{Y_d}{Y'_d}\leq  \max \frac{Y_0}{Y'_0}, 
\end{align*}
which implies $h(Y_d, Y_d')\leq h(Y_0, Y'_0)$ for all choices of the updated indices. \\ \\Moreover, there exists a selection of indices $I_1,\ldots, I_d$ for which the rightmost inequality is strict. To show this, let $\mathcal I$ be the set of indices where the maximum $\max \frac{Y_0}{Y'_0}$ is achieved. Notice that $|\mathcal I|<d$ (by the assumption that $Y_0$ and $Y'_0$ are distinct), and that at least one element $I$ of $\mathcal I$ is connected to an element of $\mathcal I^c$ (since the network is connected). Therefore, if we start by selecting $I$, $\frac{(Y_1)_I}{(Y'_1)_I}<\frac{(Y_0)_I}{(Y'_0)_I}$ and thus the cardinality of $\mathcal I$ drops by one. Iterating this at most $d$ times, we obtain the conclusion.\\ \\
Let us denote the event of a specific such index series occurring by $A$, and the corresponding instance of the Markov chain after $d$ steps by $Y_d(A),Y'_d(A)$. Similarly, the event for any other index series is denoted by $A^c$, and the corresponding conditional version of the Markov chain by $Y_d(A^c),Y'_d(A^c)$. Our previous observations entail
\begin{align*}
\quad h(Y_d(A), Y'_d(A))<h(Y_0, Y'_0),\quad h(Y_d(A^c), Y'_d(A^c))\leq h(Y_0, Y'_0),
\end{align*}
so that we obtain 
\begin{align*}
    W(\nu_d, \nu'_d)&\leq \left(1-\frac{1}{d^d}\right)h(Y_d(A^c),Y'_d(A^c))+\frac{1}{d^d}h(Y_d(A), Y'_d(A))\\&<h(Y_0,Y'_0)\\&=W(\nu, \nu')
\end{align*}
as desired.
\end{proof}
\begin{remark}\label{statsupport}
The choice of $Y_0$ with identical coordinates shows that the unique stationary measure has support contained in $\mathcal S_{\theta^{d-1}}$, for $\theta$ defined in Lemma \ref{avoidancelemma}.
\end{remark}
\subsection{Concentration inequalities}
Let $\overline Y_0$ denote a random variable distributed according to $\nu$, the unique stationary measure given by Lemma \ref{invariantunique}. As observed in Remark \ref{statsupport}, we obtain that $\overline Y_0\in S_{\theta^{d-1}}$ with probability one. In particular, this shows that
\begin{equation}\label{alpha}
    \alpha:=\mathbb E[\log \|A_{I}\overline Y_0\|]\in (-\infty,0),
\end{equation}
where the expectation is taken over $\overline Y_0\sim \nu$ and $I$ uniformly distributed in $\{1,\ldots d\}$. Since $\nu$ is stationary we deduce
\begin{equation}\label{stationaryy}
    \overline Y_1=\frac{A_I \overline Y_0}{\|A_I \overline Y\|}\stackrel{d}{=}\overline Y_0.
\end{equation}
and more generally $\overline Y_k\stackrel{d}{=}\overline Y_0$. \\ \\ 
We now need to connect properties of the chain $Y_k$ back to properties of $X^{(n)}_k$. Owing to the invariance of total variation under re-scaling, in the proof of Theorem \ref{maintheorem} we can restrict our attention to a fixed Markov chain dynamics $X_k:=X^{(1)}_k$ where the dependence on $n$ is only through the initial conditions $X^{(n)}_0$ (as a matter of fact, we neglect the dependence on $n$ of the whole chain with a slight abuse of notation). The assumption of Theorem \ref{maintheorem} guarantees that the ratio between the minimum and the maximum coordinate remains bounded away from zero uniformly for given $a_-, a_+$, and such that $\|X^{(n)}_0\|=\Theta_{a_{-},a_{+}}(n)$ (here, the notation $f^{(n)}=\Theta(g^{(n)})$ means that $f^{(n)}/g^{(n)}$ remains bounded away from zero and infinity). 
\\ \\
If $X'^{(n)}_0=\|X^{(n)}_0\|\overline Y_0$, combining \eqref{stationaryy} with \eqref{Yonthesphere} we obtain
\begin{equation}\label{propertyalpha}
    \mathbb E[\log\|\mathbb E_{Z}[X'^{(n)}_k]\|]-\log \|X_0^{(n)}\|=\sum_{j=1}^k\mathbb E[\log \|A_{I_j} \overline Y_j\|]=k\mathbb E[\log \|A_I\overline Y_0\|]=k\alpha.
\end{equation}
Armed with this, we can prove the following.
\begin{lemma}\label{concentration}
Let $Y_0\in \mathcal S_{\delta}$ and $X_0^{(n)}=\|X_0^{(n)}\|Y_0$ with $\|X_0^{(n)}\|=\Theta_{a_-, a_+}(n)$. Then, there exists a constant $\gamma=\gamma(a_-,a_+)>0$, independent of $k$ and $n$, such that
\begin{align*}
    \mathbb P\left[\left|\log\left(\frac{\|\mathbb E_{Z}[X_k]\|}{\|X_0^{(n)}\|}\right)-k\alpha\right|\geq  t\sqrt k \right]\leq 2e^{-\gamma t^2}
\end{align*}
for all $t>0$ and $k$ large enough.
\end{lemma}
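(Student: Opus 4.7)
The plan is to reduce the statement to a concentration bound for an additive functional of the sphere chain $(Y_j)_{j\geq 0}$, and then to apply Azuma--Hoeffding to a Gordin-type martingale decomposition.

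By \eqref{Yonthesphere} and the hypothesis $X_0=nY_0$, one has $\mathbb{E}_Z[X_k]=n\,Y_k\!\prod_{j=1}^k\|A_{I_j}Y_{j-1}\|$; hence
\[
\ln\!\left(\frac{\|\mathbb{E}_Z[X_k]\|}{n}\right)-k\alpha \;=\; S_k-k\alpha, \qquad S_k:=\sum_{j=1}^k\ln\|A_{I_j}Y_{j-1}\|.
\]
Lemma~\ref{avoidancelemma} confines the auxiliary chain to a compact set $\mathcal{S}_{\delta'}\subset\mathcal{S}$ depending only on $\delta$, on which the integrand $(y,i)\mapsto\ln\|A_iy\|$ is bounded in absolute value by some $C=C(\delta)$ independent of $k$ and $n$.

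Setting $F(y):=\mathbb{E}_I[\ln\|A_Iy\|]$ and letting $P$ denote the transition kernel of $Y$, the next step is to split $S_k-k\alpha=M_k^{(1)}+T_k$ with
\[
M_k^{(1)}:=\sum_{j=1}^k\bigl(\ln\|A_{I_j}Y_{j-1}\|-F(Y_{j-1})\bigr),\qquad T_k:=\sum_{j=1}^k\bigl(F(Y_{j-1})-\alpha\bigr).
\]
The first term is a martingale with respect to $\sigma(I_1,\dots,I_j)$ with increments bounded by $2C$, so Azuma--Hoeffding immediately produces a Gaussian tail of the required form $2e^{-\gamma_1 t^2}$. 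For the second term, the Poisson equation $g-Pg=F-\alpha$ on $\mathcal{S}_{\delta'}$ admits a bounded solution $g$ (see the next paragraph), so that the standard Gordin decomposition
\[
T_k \;=\; g(Y_0)-g(Y_k)+M_k^{(2)}, \qquad M_k^{(2)}:=\sum_{j=1}^k\bigl(g(Y_j)-Pg(Y_{j-1})\bigr),
\]
expresses $T_k$ as a bounded boundary term plus a martingale with increments bounded by $2\|g\|_\infty$. A second application of Azuma--Hoeffding and a union bound then yield the claim, noting that the statement is trivial when $t\sqrt{k}$ is comparable to $\|g\|_\infty$.

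The technical core, and main obstacle, is the construction of the bounded Poisson solution $g$. This reduces to geometric decay $\|P^jF-\alpha\|_\infty\leq C\lambda^j$ for some $\lambda<1$, which in turn follows from a uniform Wasserstein contraction $W(P^d\mu,P^d\mu')\leq\lambda\,W(\mu,\mu')$ in the Hilbert metric on $\mathcal{S}_{\delta'}$. Lemma~\ref{invariantunique} alone only provides strict, non-quantitative contraction; the strengthening to uniform exponential contraction comes from a compactness argument. Indeed, the contraction factor associated to the favourable index sequence exhibited in the proof of Lemma~\ref{invariantunique} depends continuously on the starting pair $(Y_0,Y_0')$ and is strictly less than $1$, hence is bounded away from $1$ on the compact product $\mathcal{S}_{\delta'}\times\mathcal{S}_{\delta'}$. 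Together with the Lipschitz property of $F$ in the Hilbert metric, this yields the exponential decay needed to define and bound $g$, and closes the argument.
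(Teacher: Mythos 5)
Your reduction to the sphere chain, the splitting of $S_k-k\alpha$ into the martingale $M_k^{(1)}$ plus $T_k$, and the Azuma bounds (with the boundary terms $g(Y_0)-g(Y_k)$ absorbed for large $k$) are all fine \emph{provided} a bounded solution $g$ of the Poisson equation exists. But the step you yourself identify as the technical core is where the argument has a genuine gap. You claim uniform Wasserstein contraction $W(P^d\mu,P^d\mu')\leq\lambda W(\mu,\mu')$, $\lambda<1$, by saying that the contraction factor of the favourable index sequence from Lemma \ref{invariantunique} is continuous in $(Y_0,Y_0')$ and $<1$, hence bounded away from $1$ on the compact set $\mathcal S_{\delta'}\times\mathcal S_{\delta'}$. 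This does not work as stated, for two reasons. First, the relevant ratio $h(Y_d,Y_d')/h(Y_0,Y_0')$ is only defined off the diagonal, and the set of \emph{distinct} pairs is not compact: pointwise strictness plus continuity off the diagonal is compatible with the ratio tending to $1$ as the pair approaches the diagonal, which is exactly the regime you cannot exclude by compactness. Second, the favourable sequence exhibited in Lemma \ref{invariantunique} is chosen according to the argmax set of the ratios $y_i/y_i'$, so it jumps as the pair varies, and the factor it achieves need not be continuous nor uniformly below $1$; e.g.\ on a path with ratios of the form $(1+\eta,\,1+\eta-\eta^2,\,1)$ the greedy first update lowers the maximal ratio only by $O(\eta^2)=o(h)$. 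So the geometric decay $\|P^jF-\alpha\|_\infty\leq C\lambda^j$, hence the boundedness of $g$, is not established. The statement you want is very likely true, but it needs a genuinely quantitative input, e.g.\ exhibiting a fixed index word whose associated matrix product is entrywise positive on the cone and invoking Birkhoff's contraction coefficient for the Hilbert metric (then averaging over index sequences, each occurring with probability at least $d^{-m}$), or a careful weighted-average analysis of the ratios $y_i/y_i'$ with weights bounded below on $\mathcal S_{\delta'}$.

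It is worth noting that the paper avoids this issue entirely, and its route is markedly more economical. It views $\ln\|\mathbb E_Z[X_k]\|$ as a function of the i.i.d.\ updates $I_1,\ldots,I_k$ and checks, via Lemma \ref{avoidancelemma}, that re-sampling a single update changes this quantity by $O(1)$ uniformly in $k$, $n$ and the indices; McDiarmid's bounded-difference inequality then gives the Gaussian tail around the \emph{mean}. The recentring at $k\alpha$ is handled separately by coupling (same updates) with a copy started from the stationary law $\nu$ on the sphere, for which the increments telescope exactly to $k\alpha$, and Lemma \ref{avoidancelemma} again shows the two logarithms differ by $O(1)$. Thus only the qualitative uniqueness statement of Lemma \ref{invariantunique} is used, and no rate of convergence of the sphere chain is needed — precisely the ingredient your approach must construct from scratch.
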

\begin{proof}
In what follows, the symbol $\lesssim$ denotes an inequality up to a constant independent of $k, n$ and the choice of indices. Construct $X'_k$ from $X_k$ by re-sampling the $s$th update for some $1\leq s\leq k$. Then
\begin{align*}
\frac{\|\mathbb E_{Z_1,\ldots, Z_k}[X_k]\|}{\|\mathbb E_{Z_{1},\ldots, Z_k}[X'_k]\|}=\frac{\|\mathbb E_{Z_{s+1},\ldots, Z_k}[A_{I_{s}}\tilde Y_0]\|}{\|\mathbb E_{Z_{s+1},\ldots, Z_k}[A_{I'_{s}}\tilde Y_0]\|}
\end{align*}
for some $\tilde Y_0\in \mathcal S_{\delta'}$, where $\delta'$ depends on $\delta$ and $\theta$ only owing to Lemma \ref{avoidancelemma}. Since for all indices $I$
\begin{align*}
    \mathbf 1_1 \lesssim A_{I}\tilde Y_0 \lesssim \mathbf 1_1,
\end{align*}
we can use Lemma \ref{avoidancelemma} applied to $\mathbf 1_{1}$ to deduce
\begin{align*}
1 \lesssim  \frac{\|\mathbb E_{Z_1,\ldots, Z_k}[X_k]\|}{\|\mathbb E_{Z_{1},\ldots, Z_k}[X'_k]\|}\lesssim 1
\end{align*}
or, equivalently,
\begin{align*}
    1 \lesssim \left|\log \|\mathbb E_{Z_1,\ldots, Z_k}[X_k]\|-\log\|\mathbb E_{Z_1,\ldots, Z_k}[X'_k]\|\right|\lesssim 1. 
\end{align*}
Using the bounded difference inequality \cite{McDiarmid1989}, we obtain the claim where $k\alpha$ is replaced by $\mathbb E\log\left[\frac{\|\mathbb E_{Z}[X_k]\|}{\|X^{(n)}_0\|}\right]$. Therefore, it suffices to show that 
\begin{align*}
   \left| \mathbb E\log \|\mathbb E_{Z_1,\ldots, Z_k}[X_k/\|X^{(n)}_0\|]\|-k\alpha\right|\lesssim 1,
\end{align*}
for then the claim follows by possibly decreasing $\gamma$ and taking $k$ large enough. Using \eqref{propertyalpha}, we obtain
\begin{align*}
    \mathbb E\log \|\mathbb E_{Z_1,\ldots, Z_k}[X_k/\|X^{(n)}_0\|]\|-k\alpha=\mathbb E \left[\log \frac{\|\mathbb E_{Z_1,\ldots, Z_k}[X_k]\|}{\|\mathbb E_{Z_1,\ldots, Z_k}[X'_k]\|}\right].
\end{align*}
where $X'^{(n)}_0=\|X^{(n)}_0\|\overline Y_0$ with $\overline Y_0$ distributed according to the stationary distribution $\nu$, and we use the same updates on both $X_k$ and $X'_k$. Since $\overline Y_0\in S_{\theta^{d-1}}$ with probability one (see Remark \ref{statsupport}), the claim then follows applying once more Lemma \ref{avoidancelemma} as before.
\end{proof}
\section{Proof of the main result}\label{sec4}
As hinted at in the previous section, we can restrict our attention to the Markov chain $X_k$, with a varying initial condition $X_0^{(n)}$, and $\overline X=\overline X^{(1)}$, the latter being given by \eqref{stationary}. Notice that $\overline X=O(1)$ with high probability. We will denote the corresponding laws by $\pi_k$ and $\pi$. We start by observing that for any choice of $X^{(n)}_0$ one has
\begin{align*}
    X_k-A_{I_k}\ldots A_{I_1}X^{(n)}_0&=A_{I_k}\ldots A_{I_1}b_{I_1}(Z_{1})+ A_{I_k}\ldots A_{I_2}b_{I_2}(Z_{2})+ \ldots+b_{I_k}(Z_{k})\\&\stackrel{d}{=} A_{I_1}\ldots A_{I_k}b_{I_k}(Z_{k})+ A_{I_1}\ldots A_{I_{k-1}}b_{I_{k-1}}(Z_{k-1})+ \ldots + b_{I_1}(Z_{1}),
\end{align*}
where we used exchangeability of the sequences $I_1,\ldots, I_k$ and $Z_1,\ldots, Z_k$. This entails
\begin{equation}\label{xminusinitial}
     X_k-A_{I_k}\ldots A_{I_1}X^{(n)}_0\stackrel{d}{\to}\overline X.
\end{equation}
We are now ready to prove our main result.
\begin{proof}[Proof of Theorem \ref{maintheorem}]
We start proving the first claim, namely 
\begin{align*}
  \lim_{\beta\rightarrow -\infty}\lim_{n\rightarrow +\infty}d_{tv}(\pi, \pi_k)=1.
\end{align*} By definition of total variation distance, it is enough to show that for all $\epsilon>0$ there exists $\beta\in\mathbb R$ and $R>0$ such that $|\mathbb P(\overline X\in B_R)-\mathbb P(X_k\in B_R)|\geq 1-\epsilon$ for all $k=k(n,\beta)$ (given in \eqref{choiceofk}) with $n$ large enough. Here, $B_R$ denotes the ball centered at the origin with radius $R$. \\ \\
Fix $\epsilon>0$, and pick $R$ large enough so that for all $k$ sufficiently large
\begin{align*}
    \mathbb P(\overline X\in B_R)\geq 1-\frac{\epsilon}{3}, \quad \mathbb P(X_k-A_{I_k}\ldots A_{I_1}X^{(n)}_0\in B_{R})\geq 1-\frac{\epsilon}{3}.
\end{align*}
This is possible owing to \eqref{xminusinitial}. A union bound leads to
\begin{align*}
    \mathbb P(X_k\in B_R)&\leq \mathbb P(X_k-A_{I_k}\ldots A_{I_1}X^{(n)}_0\not\in B_{R})+\mathbb P(A_{I_k}\ldots A_{I_1}X^{(n)}_0\in B_{2R})\\&\leq \frac{\epsilon}{3}+\mathbb P(A_{I_k}\ldots A_{I_1}X^{(n)}_0\in B_{2R}).
\end{align*}
Therefore, we have
\begin{align*}
    \mathbb P(\overline X\in B_R)-\mathbb P(X_k\in B_R)\geq 1-\frac{2\epsilon}{3}-P(A_{I_k}\ldots A_{I_1}X^{(n)}_0\in B_{2R}),
\end{align*}
so that we obtain the claim provided that 
\begin{align*}
    P(A_{I_k}\ldots A_{I_1}X^{(n)}_0\in B_{2R})\leq \frac{\epsilon}{3}
\end{align*}
for $k$ as in \eqref{choiceofk} with $\|X^{(n)}_0\|=\Theta_{a_-, a_+}(n)$ large and $\beta$ small enough. Since $Z$ has mean zero, we have
\begin{align*}
    \mathbb E_{Z}[X_k]=A_{I_k}\ldots A_{I_1}X^{(n)}_0,
\end{align*}
and passing to logarithms we need to bound
\begin{align*}
    \mathbb P\left(\log \|\mathbb E_{Z}[X_k]\| \leq \log(2R)\right).
\end{align*}
Thanks to Lemma \ref{concentration}, we know that for all $t\geq 0$ and all $k$ sufficiently large
\begin{align*}
    \mathbb P\left(\log \|\mathbb E_{Z}[X_k]\|\leq \log n+\Theta_{a_-, a_+}(1)+k\alpha-t\sqrt k\right)\leq 2e^{-\gamma t^2}.
\end{align*}
In order to conclude, take $t$ large enough so that the right side is smaller than $\frac{\epsilon}{3}$. Then, for $k$ as in \eqref{choiceofk} we have
\begin{align*}
    \log n+k\alpha-t\sqrt k+\Theta_{a_-, a_+}(1)=\sqrt{\log n}\left(-\beta-\frac{t}{\sqrt{-\alpha}}+o(1)\right)\geq \log (2R)
\end{align*}
for $\beta$ negative and with a large enough absolute value. Therefore, 
\begin{align*}
    \mathbb P\left(\log \|\mathbb E_{Z}[X_k]\| \leq \log (2R)\right)\leq \frac{\epsilon}{3}
\end{align*}
for all $k=k(n,\beta)$ with $n$ sufficiently large, as desired.  \\ \\
We now move to the second claim, namely
\begin{align*}
\lim_{\beta\rightarrow +\infty}\lim_{n\rightarrow +\infty}d_{tv}(\pi, \pi_k)=0.
\end{align*}
Consider an arbitrary $X^{(n)}_0\in \mathcal S_{\delta}$ for some $\delta>0$ with $\|X^{(n)}_0\|=\Theta_{a_-, a_+}(n)$, 
and let $X'_0$ be distributed according to the stationary distribution (notice that $\|X'_0\|=O(1)$). Owing to the coupling interpretation of total variation distance we need to show that, for all $\epsilon>0$, one can construct a coupling between $X_k$ and $X'_k$ such that
\begin{align*}
    \mathbb P(X_k\neq X_k')\leq \epsilon
\end{align*}
for $k=k(n,\beta)$ as in \eqref{choiceofk} with $\beta$ large enough and all $n$ sufficiently large. \\ \\
Let $T$ denote the first time that all coordinates have been selected at least once. On $\{T\geq k\}$, let the two chains $X_k$ and $X_k'$ run independently. Conditioned on $\{T\leq k\}$, let $1\leq k_i\leq k$ be the last time that coordinate $i$ is selected, $i\in\{1,\ldots, d\}$. Consider a coupling between $X_k$ and $X'_k$ with the same choice of coordinate updates, and using the same additive noise except at the times $k_i$. Without loss of generality, assume that $k_1 < k_2 < \ldots < k_d=k$. Then we can write
\begin{align*}
    X_k-X'_k=A_{I_k}\ldots A_{I_1}(X^{(n)}_0-X'_0)+G(Z_{k_1}-Z'_{k_1},\ldots, Z_{k_d}-Z'_{k_d}),
\end{align*}
where $G=G_{I_1,\ldots, I_k}$ is the linear map that sends $z\in\mathbb R^d$ to 
\begin{align*}
    G(z)=\sum_{i=1}^dA_{I_k}\ldots A_{I_{k_i+1}}b_{I_{k_i}}(z).
\end{align*}
Notice that the last summand reduces to $(0,\ldots,0, \sigma_{d}z_d)$, and in general the $i$th summand is a vector with the first $i-1$ entries being zero owing to \eqref{contractionpart} and the assumption $k_1 < \ldots < k_d$. In particular, the matrix $G$ is lower triangular with $\sigma_i$s on the diagonal, so that $G$ is invertible and its inverse has a uniformly bounded norm (with respect to $k$ and the choice of the indices).
\\ \\Moreover, for all choices of $r>0$ we have
\begin{align*}
    d_{TV}(\pi_k, \pi)&\leq  \mathbb P(X_k\neq X'_k)&\\ &\leq \mathbb P(T>k)+\mathbb P(\|A_{I_k}\ldots A_{I_1}(X^{(n)}_0-X'_0)\|\geq r)\\&\enspace+\sup_{\|s\|\leq r, I_1,\ldots, I_k}\mathbb P(Z\neq Z'+G^{-1}(s)).
\end{align*}
Here, $Z$ and $Z'$ are vectors in $\mathbb R^d$ with i.i.d. components distributed according to $\gamma$. The first terms is smaller than $\epsilon/3$ for $k$ large, owing to a classical coupon collector argument. As for the last term, we can couple $Z$ and $Z'$ optimally so that (here we identify $\gamma$ with its density) 
\begin{align*}
    \mathbb P(Z-Z'\neq G^{-1}(s))&\leq \frac{1}{2}\int_{\mathbb R^d}|\gamma(z_1)\ldots \gamma(z_d)-\gamma(z_1+G^{-1}_1(s))\ldots \gamma(z_d+G^{-1}_d(s))|dz \\ & \leq  \frac{d}{2}\sup_{i\in\{1,\ldots, d\}} \int_{\mathbb R}|\gamma(z)-\gamma(z+G_i^{-1}(s))|dz\\&\leq 
    \epsilon/3
\end{align*}
provided that $s\leq r$ with $r$ small enough, thanks to the uniform control on the inverse of $G$ and to the continuity of the translation operator on integrable functions. 
As for the second term, a union bounds yields
\begin{align*}
\mathbb P(\|A_{I_k}\ldots A_{I_1}(X^{(n)}_0-X'_0)\|\geq r)&\leq \mathbb P(\|A_{I_k}\ldots A_{I_1}X'_0\|\geq \frac{r}{2})\\&\enspace+\mathbb P(\|A_{I_k}\ldots A_{I_1}X^{(n)}_0)\|\geq \frac{r}{2}).
\end{align*}
Since $\|X_0'\|=O(1)$ and $\|X^{(n)}_0\|=\Theta_{a_-, a_+}(n)$ with $X^{(n)}_0\in S_{\delta}$ for some $\delta>0$, it is enough to show that the second term is smaller than $\epsilon/6$. On the other hand, using Lemma \ref{concentration} and following the very same approach of the proof of the first claim, we obtain that the second term is smaller than $\epsilon/6$ for $k=k(n,\beta)$ as in \eqref{choiceofk} with $\beta$ large and $n$ sufficiently large. \\ \\
Altogether, this implies the main result.
\end{proof}

\section*{Acknowledgment}
We warmly thank Persi Diaconis for suggesting the problem being studied, and for his constant help and support. B. Gerencs\'er was supported by NRDI (National Research, Development and Innovation Office) grant KKP 137490 and by the J\'anos Bolyai Research Scholarship of the Hungarian Academy of Sciences.

\bibliographystyle{abbrv}
\bibliography{Bibliography.bib}
\end{document}